\providecommand{\tabularnewline}{\\}
  \theoremstyle{definition}
  \newtheorem{defn}{Definition}
  \theoremstyle{plain}
  \newtheorem{prop}{Proposition}
  \theoremstyle{remark}
  \newtheorem{rem}{Remark}
\theoremstyle{plain}
\newtheorem{thm}{Theorem}
  \theoremstyle{plain}
  \newtheorem{cor}{Corollary}
 \theoremstyle{definition}
  \newtheorem{example}{Example}
\theoremstyle{definition}
\newtheorem{cexample}{Counterexample}
\begin{document}

\title{On the Equivalence of Euler-Lagrange and Noether Equations}
\begin{abstract}
We prove that on the condition of non-trivial solutions, the Euler-Lagrange
and Noether equations are equivalent for the variational problem of
nonlinear Poisson equation and a class of more general Lagrangians,
including position independent and of p-Laplacian type. As applications
we prove certain propositions concerning the nonlinear Poisson equation
and its generalisations, the equivalence of admissible and inner variations
and discuss the inverse problem of determining the Lagrangian from
conservation or symmetry laws.
\end{abstract}

\author{A. C. Faliagas}

\address{Department of Mathematics, University of Athens, Panepistemiopolis,
15784 Athens, Greece.}

\email{afaliaga@math.uoa.gr}

\thanks{I would like to express my thanks to N. Alikakos for reading this
paper and suggesting many useful improvements.}

\maketitle

\section{Introduction}

Alikakos in \cite{AL} presented a new method, according to which
Derrick-Pohozaev identities and monotonicity formulas can be derived
from energy-momentum tensors and reported some interesting applications
to the nonlinear Poisson equation. In subsequent work, Alikakos and
the author \cite{ALFAL,FAL} extended these ideas by deriving Derrick-Pohozaev
identities and monotonicity formulas in a more systematic fashion
and for a larger variety of Lagrangians.

As remarked by the author in \cite{FAL}, Derrick-Pohozaev identities
and monotonicity formulas, derived by the aforementioned method, seemed
to be more general than those derived by previous methods. More precisely,
the starting point for the derivation of Derrick-Pohozaev identity
by classical methods is the Euler-Lagrange equations. In the previously
mentioned method by Alikakos and the author, however, the Euler-Lagrange
equations can be replaced by Noether's equations \cite{NOE}, which
is a weaker hypothesis when $u$ is a classical $\mathrm{C}^{2}$
solution (\cite{GIA-1}, Chapter 3). A simple proof of this fact \cite{GIA-1}
is provided by Counterexample \ref{cexa:2} below.

The foregoing counterexample relies on trivial solutions of Noether's
equations. It therefore seemed meaningful to attempt the construction
of counterexamples involving non-trivial solutions. In my attempt,
I reached the surprising conclusion that, for a rather large class
of Lagrangians, only counterexamples involving trivial solutions are
possible. In all other cases the Euler-Lagrange and Noether's equations
are equivalent. The proof of this statement is the main result of
this paper. Theorem \ref{thm:NL-Poi-1} proves for the Lagrangian
of the nonlinear Poisson equation that every non-trivial classical
solution of Noether's system is necessarily a solution of the Euler-Lagrange
equation. Theorem \ref{thm:2} states a condition under which the
Euler-Lagrange and Noether's equations are equivalent. Theorems \ref{thm:Gen-1}
and \ref{thm:4} extend the basic idea to more general Lagrangians.
In Section \ref{sec:2} there is a review of inner variations, energy-momentum
tensors and other prerequisite material, serving mainly to introduce
notations. More details on these topics are found in \cite{GIA-1,FAL}.

Finally, there is yet another reason for considering the undertaking
of this research meaningful. According to Noether's theorem, given
any Lagrangian, there is a conservation law corresponding to each
continuous {}``symmetry transformation'', as are called in the physical
literature transformations leaving the Lagrangian and the equations
of motion invariant in form (\cite{FEY1}, �52-3). This permits observed
selection rules in nature to be directly transposed into symmetry
requirements on the Lagrangian and is used by physicists as a guide
for the introduction of interaction terms when developing new Lagrangians
in quantum field theory (\cite{BJDR}, �11.4, p. 17). But if we know
that a set of conservation laws is equivalent to the equations of
motion (i.e. the Euler-Lagrange equations), this should already determine
the Lagrangian or at least a class of equivalent Lagrangians. We demonstrate
this by an example in Section \ref{sec:Apps}.

In this paper, we are concerned with the purely technical mathematical-analytical
aspects of the subject, leaving the applications to physics for subsequent
work.

\section{\label{sec:2}Noether's Equations}

In this section we summarise background material which is necessary
for the understanding of the statement and proof of main results and
serves as a means for the introduction of notations to be used. The
standard reference is \cite{GIA-1}.

\subsection{General notation}

Throughout this paper $\Omega$ is a domain (open connected subset)
of $\mathbb{R}^{N}$, except when otherwise stated. The following
abbreviated notation \[
u_{,i}=\tfrac{\partial u}{\partial x_{i}}\]
is used for partial derivatives. Einstein's summation convention applies
everywhere, except when the contrary is explicitly mentioned.

Following standard notation, $\mathrm{C}^{r}(\Omega)$ is the set
of $r$ times continuously differentiable functions in $\Omega$ and
$\mathrm{C}^{r}(\overline{\Omega})$ the set of restrictions to $\Omega$
of $r$ times continuously differentiable functions in $\mathbb{R}^{N}$.
The set of $r$ times continuously differentiable, $\mathbb{R}^{M}$
(or $\mathbb{C}^{M}$) valued functions in $\Omega$ is denoted by
$\mathrm{C}^{r}(\Omega)^{M}$ and the corresponding set of restrictions
to $\Omega$ of $r$ times continuously differentiable functions in
$\mathbb{R}^{N}$, $\mathrm{C}^{r}(\overline{\Omega})^{M}$. $\mathcal{D}(\Omega)$
denotes the set of real (or complex) $\mathrm{C}^{\infty}$ functions
on $\Omega$ with compact support in $\Omega$ and $\mathcal{D}(\Omega)^{M}$
the set of $\mathbb{R}^{M}$ (or $\mathbb{C}^{M}$) valued $\mathrm{C}^{\infty}$
functions on $\Omega$ with compact support in $\Omega$.

\subsection{\label{sub:2-2}Variational functionals}

We will be considering (nonlinear) functionals $J:\mathrm{C}^{1}(\overline{\Omega})^{M}\to\mathbb{R}$,
$M\in\mathbb{N}$, of the form\medskip{}

\textsf{(VF)}\hfill{}$J(u):=\int_{\Omega}L(x,u(x),Du(x))dx,$\hfill{}\medskip{}

where $\Omega$ is a bounded domain of $\mathbb{R}^{N}$, $L(x,y,z)$
a Lagrangian,\[
L:\Omega\times\mathbb{R}^{M}\times\mathbb{R}^{N\cdot M}\to\mathbb{R},\]
$L\in\mathrm{C}^{1}(\overline{\Omega}\times\mathbb{R}^{M}\times\mathbb{R}^{N\cdot M})$
and $u\in\mathrm{C}^{1}(\overline{\Omega})^{M}$.

A function $u\in\mathrm{C}^{1}(\Omega)^{M}$ is a \emph{critical point}
of $J$ when\[
\delta J(u)v:=\left.\tfrac{d}{dt}J(u+tv)\right|_{t=0}=0\quad\forall v\in\mathcal{D}(\Omega)^{N}.\]
The derivative $\delta J(u)v$ is the \emph{variation of $J$ at $u$
in direction} $v$.

When $u\in\mathrm{C}^{2}(\Omega)^{M}$, an easy calculation shows\[
\delta J(u)v=\int_{\Omega}\delta L(u)\cdot vdx\]
where $\delta L(u)=(\delta L(u)_{i})_{i=1,\cdots,M}$ is the vector
field with components\begin{equation}
\delta L(u)_{i}=\left.\left(L_{y_{i}}-\tfrac{\partial}{\partial x_{j}}L_{z_{ij}}\right)\right|_{(x,u(x),Du(x))}.\label{eq:0}\end{equation}
We will refer to $\delta L$ as the \emph{Euler-Lagrange derivative}.
Every critical point $u\in\mathrm{C}^{2}(\Omega)^{M}$ of $J$ satisfies
the \emph{Euler-Lagrange equations}\[
\mbox{\ensuremath{\delta}}L(u)=0.\]

\subsection{\label{sub:2-3}Inner variations }

Inner variations are a special kind of variations. Let $\mathrm{I}=]-\delta,\delta[$,
$\delta>0$. Fixing a $u\in\mathrm{C}^{1}(\Omega)^{M}$ and a set
of diffeomorphisms $(\xi^{t})_{t\in\mathrm{I}}$, $\xi^{t}:\Omega\to\Omega$,
the following set of functions\[
\widetilde{u}(x,t):=u(\xi^{t}(x)),\quad t\in\mathrm{I}\]
define a set of variations of $u$ under certain conditions, to be
made precise in the following definition.
\begin{defn}
\textsf{A)} Let $h\in\mathcal{D}(\Omega)^{N}$, $\delta>0$ and $\mathrm{I}=]-\delta,\delta[$.
A set of diffeomorphisms $(\xi^{t})_{t\in\mathrm{I}}$ of $\Omega$
having the properties \textsf{(i)} - \textsf{(iii)} below and such
that the function $\xi:\Omega\times\mathrm{I}\to\Omega$ , $\xi(x,t)=\xi^{t}(x)$
is $\mathrm{C}^{\infty}$-differentiable, is called an \emph{inner
variation of }$\Omega$\emph{ in direction} $h$ or \emph{which is
defined by} $h$:

\textsf{(i)} $\xi^{0}=id_{\Omega}$, i.e. $\xi^{0}(x)=x$ in $\Omega$.

\textsf{(ii)} $D_{t}\xi(x,0)=h(x)\;\forall x\in\Omega$.

\textsf{(iii)} $\xi^{t}|\partial\Omega=id_{\Omega}$, i.e. $\xi^{t}(x)=x\;\forall x\in\partial\Omega$.

\textsf{B)} Let $J$ be a functional satisfying (\textsf{VF}), $u\in\mathrm{C}^{1}(\overline{\Omega})^{M}$,
$h\in\mathcal{D}(\Omega)^{N}$ and $(\xi_{h}^{t})_{t\in\mathrm{I}}$
the inner variation of $\Omega$ defined by $h$. The set of functions\[
u\circ\xi_{h}^{t},\quad t\in\mathrm{I}\]
is called the \emph{inner variation of} \emph{$u$ in direction }$h$.
The derivative\begin{equation}
\mathfrak{d}J(u)h:=\left.\tfrac{d}{dt}J(u\circ\xi_{h}^{t})\right|_{t=0},\label{eq:0-1}\end{equation}
is called the \emph{inner variation of the functional $J$ at $u$
in direction }$h$.\hfill{}$\square$
\end{defn}
For the calculation of inner variations the following proposition
is used.
\begin{prop}
\label{pro:invar}Let\textup{ }$J$ be a functional satisfying \textsf{(VF)}
and \textup{$u\in\mathrm{C}^{1}(\overline{\Omega})^{M}$}. The inner
variation of $J$ at $u$ is given by\begin{equation}
\mathfrak{d}J(u)h=\int_{\Omega}\left(u_{k,i}L_{z_{kj}}h_{i,j}-L\mathrm{div}h-L_{x_{i}}h_{i}\right)dx,\quad h\in\mathcal{D}(\Omega)^{N},\label{eq:10}\end{equation}
where $L$, $L_{x_{i}}:=\tfrac{\partial L}{\partial x_{i}}$, $L_{z_{kj}}:=\tfrac{\partial L}{\partial z_{kj}}$
are taken at the point $(x,u(x),Du(x))$, i.e. $L=L(x,u(x),Du(x))$,
$L_{x_{i}}=L_{x_{i}}(x,u(x),Du(x))$ etc.\end{prop}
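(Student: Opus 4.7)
The plan is to compute $\mathfrak{d}J(u)h$ by first applying the substitution $y=\xi^{t}(x)$ to isolate the $t$-dependence of the integrand into a few cleanly differentiable factors, and then differentiating under the integral at $t=0$. This change-of-variables detour is essentially forced by the hypothesis $u\in\mathrm{C}^{1}$: a direct differentiation of $\partial_{x_{i}}[u_{k}(\xi^{t}(x))]=u_{k,\ell}(\xi^{t}(x))\,\partial_{x_{i}}\xi_{\ell}^{t}$ in $t$ would produce second derivatives of $u$, which are not available, so every $t$-derivative must be transferred onto the $\mathrm{C}^{\infty}$ diffeomorphism $\xi^{t}$.

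Writing $\eta^{t}:=(\xi^{t})^{-1}$, $A^{t}(x):=D\xi^{t}(x)$ and $J_{t}(x):=\det A^{t}(x)$, the change of variables yields
\begin{equation*}
J(u\circ\xi^{t})=\int_{\Omega}L\bigl(\eta^{t}(y),\,u(y),\,u_{k,\ell}(y)\,A_{\ell i}^{t}(\eta^{t}(y))\bigr)\,J_{t}(\eta^{t}(y))^{-1}\,dy.
\end{equation*}
At $t=0$ one has $\eta^{0}=id$, $A^{0}=I$, $J_{0}=1$; differentiating the identity $\xi^{t}\circ\eta^{t}=id$ and using Jacobi's formula supplies the three elementary derivatives
\begin{equation*}
\partial_{t}\eta^{t}|_{t=0}=-h,\qquad\partial_{t}A^{t}|_{t=0}=Dh,\qquad\partial_{t}J_{t}|_{t=0}=\mathrm{div}\,h.
\end{equation*}
A key side-observation is that $A^{0}=I$ is spatially constant, so the spatial chain-rule correction arising when $\eta^{t}(y)$ is inserted into $A_{\ell i}^{t}$ vanishes at $t=0$; this is precisely the cancellation that prevents $D^{2}u$ terms from appearing.

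Differentiating under the integral sign and applying the chain rule to $L$ at the three $t$-dependent slots then produces
\begin{equation*}
\mathfrak{d}J(u)h=\int_{\Omega}\bigl(-L_{x_{i}}h_{i}+L_{z_{ki}}u_{k,\ell}h_{\ell,i}-L\,\mathrm{div}\,h\bigr)\,dy,
\end{equation*}
which, after relabelling the dummy indices ($\ell\to i$, $i\to j$) in the middle term, is exactly (\ref{eq:10}). Passing $d/dt$ under the integral is legitimate because $h\in\mathcal{D}(\Omega)^{N}$ has compact support, hence $\xi^{t}\equiv id$ outside a fixed compact set for all small $|t|$, and on that compact set the integrand is $\mathrm{C}^{1}$ in $t$ with a uniformly bounded $t$-derivative. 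The main obstacle is not any single step but the bookkeeping: one must sign-track $\partial_{t}\eta^{t}|_{0}=-h$ through the inverse-function identity, exploit the vanishing spatial derivatives of $A^{0}=I$ to keep the calculation within $\mathrm{C}^{1}$ regularity of $u$, and finally perform the index relabelling that recognises the three raw contributions as the compact expression (\ref{eq:10}).
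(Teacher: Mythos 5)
Your proof is correct and follows exactly the route the paper indicates (and defers to \cite{FAL} for details): substitute $x=\eta^{t}(y)$ with $\eta^{t}=(\xi^{t})^{-1}$, differentiate under the integral at $t=0$, and use $\partial_{t}\eta^{t}|_{0}=-h$, $\partial_{t}D\xi^{t}|_{0}=Dh$, $\partial_{t}\det D\xi^{t}|_{0}=\operatorname{div}h$; you have simply supplied the computation the paper omits, including the observation that the change of variables is what keeps the argument within $\mathrm{C}^{1}$ regularity of $u$.
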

\begin{proof}
The proof consists in considering the function $\varphi(t):=J(u\circ\xi^{t})$,
applying the change of integration variable $x=\eta^{t}(y)$, where
$\eta^{t}$ is the inverse function of $\xi^{t}$, differentiating
with respect to $t$ and interchanging differentiation with integration.
The details are lengthy and have been omitted as they are available
in \cite{FAL}.
\end{proof}

\subsection{\label{sub:2-4}Energy-momentum tensor }

On using the notation\begin{equation}
\mathfrak{d}L(u)h:=u_{k,i}L_{z_{kj}}(\cdot,u,Du)h_{i,j}-L(\cdot,u,Du)h_{i,i}-L_{x_{i}}(\cdot,u,Du)h_{i}\label{eq:12}\end{equation}
the expression for the inner variation of $J$ reduces to\begin{equation}
\mathfrak{d}J(u)h=\int_{\Omega}\mathfrak{d}L(u)hdx,\quad h\in\mathcal{D}(\Omega)^{N}.\label{eq:12c}\end{equation}

When $L_{x}=0$, formula (\ref{eq:12}) simplifies further to\[
\mathfrak{d}L(u)h:=u_{k,i}L_{z_{kj}}h_{i,j}-L\delta_{ij}h_{i,j}=(u_{k,i}L_{z_{kj}}-L\delta_{ij})h_{i,j}.\]
This motivates the following definition of the energy-momentum tensor.
\begin{defn}
\label{def:EMT} Let $J$ be a functional satisfying \textsf{(VF)}.
The \emph{energy-momentum tensor} of the variational problem specified
by $J$, is defined by\begin{equation}
T_{ij}(x,y,z)=z_{ki}L_{z_{kj}}(x,y,z)-\delta_{ij}L(x,y,z)\label{eq:12a}\end{equation}
where $x=(x_{i})_{i=1,\cdots,N}\in\Omega$, $y=(y_{k})_{k=1,\cdots,M}\in\mathbb{R}^{M}$,
$z=(z_{ki})_{k=1,\cdots,M;i=1,\cdots,N}\in\mathbb{R}^{NM}$.\end{defn}
\begin{rem}
Notice that the above definition holds for general variables $(x,y,z)\in\Omega\times\mathbb{R}^{M}\times\mathbb{R}^{NM}$,
not just for $(x,u(x),Du(x))$, $x\in\Omega$. Given any vector field
$u\in\mathrm{C}^{1}(\Omega)^{M}$ we have the tensor field\begin{equation}
T_{ij}(x)=T_{ij}(x,u(x),Du(x))=u_{k,i}L_{z_{kj}}(x,u(x),Du(x))-\delta_{ij}L(x,u(x),Du(x)),\label{eq:12w}\end{equation}
for which we will be using the same symbol. Again, in this formula
it is not necessary that $u$ be a solution of the Euler-Lagrange
equations.
\end{rem}

\subsection{\label{sub:2-5}Noether's equations }

Let $u\in\mathrm{C}^{2}(\Omega)$, which is not necessarily a solution
of the Euler-Lagrange equations and $T_{ij}(x)=T_{ij}(x,u(x),Du(x))$.
By the definition of energy-momentum tensor\begin{alignat*}{1}
T_{ij,j} & =\tfrac{\partial}{\partial x_{j}}\left(u_{k,i}L_{z_{kj}}-\delta_{ij}L\right)\\
 & =u_{k,ij}L_{z_{kj}}+u_{k,i}\tfrac{\partial}{\partial x_{j}}L_{z_{kj}}-L_{x_{i}}-L_{y_{k}}u_{k,i}-L_{z_{kj}}u_{k,ij}\\
 & =\left(\tfrac{\partial}{\partial x_{j}}L_{z_{kj}}-L_{y_{k}}\right)u_{k,i}-L_{x_{i}}\end{alignat*}
where $L=L(x,u(x),Du(x))$, $L_{y_{k}}=L_{y_{k}}(x,u(x),Du(x))$ and
$L_{z_{kj}}=L_{z_{kj}}(x,u(x),Du(x))$. From this we obtain\begin{equation}
T_{ij,j}+L_{x_{i}}=\left(\tfrac{\partial}{\partial x_{j}}L_{z_{kj}}-L_{y_{k}}\right)u_{k,i}\label{eq:35}\end{equation}
which motivates the following definition.
\begin{defn}
The system of second order partial differential equations\[
T_{ij,j}(x,u(x),Du(x))+L_{x_{i}}(x,u(x),Du(x))=0\]
or in index-free notation\begin{equation}
\mathrm{div}T(x,u(x),Du(x))+L_{x}(x,u(x),Du(x))=0\label{eq:35a}\end{equation}
is called \emph{Noether's equations}.
\end{defn}
If we define \emph{inner critical points} of $J$ by $\mathfrak{d}J(u)=0$,
i.e. $\mathfrak{d}J(u)h=0$ $\forall h\in\mathcal{D}(\Omega)^{N}$,
then Noether's equations are related to inner critical points in an
analogous manner as critical points to the Euler-Lagrange equations.
Furthermore, by (\ref{eq:35}) every solution $u\in\mathrm{C}^{2}(\Omega)$
of the Euler-Lagrange equations is a solution of Noether's equations.
The converse of this statement is in general not true. Giaquinta and
Hildebrandt \cite{GIA-1} presented the following simple counterexample
to demonstrate this.

\begin{cexample}\label{cexa:2}Let $F\in\mathrm{C}^{1}(\mathbb{R})$,
$F\neq\,$const. and\[
J(u):=\int_{\Omega}F(u(x))dx,\quad u\in\mathrm{C}^{2}(\Omega)\cap\mathrm{C}^{1}(\overline{\Omega}).\]
The energy-momentum tensor is calculated by (\ref{eq:12a})\[
T=-F(u)I,\]
and Noether's equations (\ref{eq:35a}) reduce to\[
F^{\prime}(u)Du=0.\]
It is obvious that every constant function $u=c_{0}$ is a solution
of this system, but not of the Euler-Lagrange equations, which for
this functional assume the form\[
F^{\prime}(u)=0.\]

\end{cexample}

We will show in the next two sections that only trivial counterexamples
are possible for a large class of Lagrangians.

\section{Nonlinear Poisson Equation}

In this section we present the main theorem for the nonlinear Poisson
equation, see equation (\ref{eq:EL-NL-Poi}) below, in a bounded domain
$\Omega$, which can be viewed as the Euler-Lagrange equation of a
variational functional $J$ with the Lagrangian\begin{equation}
L(u,z)=\tfrac{1}{2}|z|^{2}+F(u),\label{eq:Lagr-NL-Poi}\end{equation}
where $z$ corresponds to $Du$ when $u$ is a $\mathrm{C}^{1}$ function
and $F:\mathbb{R}\to\mathbb{R}$. When $F\in\mathrm{C}^{1}(\mathbb{R})$,
$J$ clearly conforms to requirements \textsf{(VF)}.
\begin{thm}
\label{thm:NL-Poi-1}Let $\Omega$ be a bounded domain of $\mathbb{R}^{N}$,
$F\in\mathrm{C}^{1}(\mathbb{R})$, $L$ a Lagrangian of the form (\ref{eq:Lagr-NL-Poi})
and $u\in\mathrm{C}^{2}(\Omega)\cap\mathrm{C}^{1}(\overline{\Omega})$
a non-trivial classical solution of Noether's equations\begin{equation}
\mathrm{div}\, T(u,Du)=0.\label{eq:Noe-NL-Poi}\end{equation}
where $T$ is the energy-momentum tensor corresponding to the Lagrangian
$L$ with components $T_{ij}=u_{,i}u_{,j}-\delta_{ij}L\,$. Then $u$
is a solution of the Euler-Lagrange equation\begin{equation}
\Delta u=F^{\prime}(u)\label{eq:EL-NL-Poi}\end{equation}
in $\Omega$.\end{thm}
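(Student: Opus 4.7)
I would start by writing Noether's system (\ref{eq:Noe-NL-Poi}) in concrete form. Since $L_{z_{j}}=z_{j}$, $L_{y}=F'(u)$ and $L_{x}=0$, the tensor is simply $T_{ij}=u_{,i}u_{,j}-\delta_{ij}L$, and a short differentiation in which the two contributions $u_{,ij}u_{,j}$ and $u_{,k}u_{,ki}$ cancel produces
\begin{equation*}
T_{ij,j} \;=\; u_{,i}\bigl(\Delta u-F'(u)\bigr),\qquad i=1,\dots,N.
\end{equation*}
Setting $\phi(x):=\Delta u(x)-F'(u(x))$, Noether's equations are therefore equivalent to the $N$ pointwise conditions $u_{,i}(x)\,\phi(x)=0$: at every $x\in\Omega$ either $Du(x)=0$ or $\phi(x)=0$. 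The task is to show that the second alternative holds throughout $\Omega$, i.e.\ that $\phi\equiv 0$.

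Next I would introduce the open set $A:=\{x\in\Omega : \phi(x)\neq 0\}$, which is open by continuity of $\phi$. The pointwise relation above forces $Du\equiv 0$ on $A$, hence on each connected component $U$ of $A$ the function $u$ takes a constant value $c_{U}$; consequently $\Delta u=0$ and $\phi\equiv -F'(c_{U})$ on $U$, and in particular $F'(c_{U})\neq 0$.

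The decisive step is a clopen argument inside the connected set $\Omega$. Assuming for contradiction that some component $U$ has $\partial U\cap\Omega\neq\emptyset$, pick $x_{1}$ in that boundary. Because components of an open set of $\mathbb{R}^{N}$ are themselves open, $x_{1}\notin A$, so $\phi(x_{1})=0$; however $\phi\equiv -F'(c_{U})$ on $U$, and continuity gives $\phi(x_{1})=-F'(c_{U})$, forcing $F'(c_{U})=0$, a contradiction. Thus $\partial U\cap\Omega=\emptyset$, which makes $U$ both open and closed in $\Omega$, so $U=\emptyset$ or $U=\Omega$ by connectedness. The second alternative would make $u$ constant on $\Omega$, the excluded trivial case; hence $A=\emptyset$ and $\phi\equiv 0$, which is (\ref{eq:EL-NL-Poi}).

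I expect the only delicate point to be the clopen step: one must observe that the boundary of a component $U$ of $A$ cannot meet $A$ itself, which uses the fact that connected components of an open subset of $\mathbb{R}^{N}$ are open. Everything else---the identity $T_{ij,j}=u_{,i}\phi$ and the propagation of constancy via continuity---is routine once this structural observation is in place.
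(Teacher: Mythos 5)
Your proposal is correct, and it proves the theorem by a genuinely different route from the paper. The paper splits $\Omega$ into $\mathrm{A}_{0}=\{Du=0\}$ and $\mathrm{A}_{1}=\{Du\neq0\}$, shows $\partial\mathrm{A}_{0}=\partial\mathrm{A}_{1}\subset\mathrm{D}$, and then, for an interior point $x_{0}$ of $\mathrm{A}_{0}$, constructs a continuous path from $x_{0}$ to a first boundary point $x_{1}\in\partial\mathrm{A}_{0}$ (using the non-triviality hypothesis to supply an endpoint in $\mathrm{A}_{1}$ and a result of Dieudonn\'e to guarantee the path meets $\partial\mathrm{A}_{0}$); it then passes $\Delta u=0$ to the limit along the path and compares with the Euler--Lagrange equation at $x_{1}$ to conclude $F^{\prime}(c_{0})=0$. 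You instead work with the open set $A=\{\Delta u-F^{\prime}(u)\neq0\}$, observe that $Du\equiv0$ there forces $u$ to be locally constant and $\Delta u-F^{\prime}(u)\equiv-F^{\prime}(c_{U})\neq0$ on each component $U$, and then show each $U$ is clopen in $\Omega$ by the continuity contradiction at any point of $\partial U\cap\Omega$, so that connectedness plus non-triviality forces $A=\emptyset$. Both arguments ultimately rest on the same two facts --- continuity of $\Delta u-F^{\prime}(u)$ and local constancy of $u$ on open sets where $Du$ vanishes --- but your clopen argument eliminates the curve construction and the attendant topological lemma entirely, giving a shorter and more transparent proof; the paper's pathwise version isolates the intermediate statement recorded in its Corollary \ref{cor:NL-Poi-1} ($f(u)=0$ on $\overset{\circ}{\mathrm{A}}_{0}$), though that corollary also follows immediately from your conclusion $\Delta u-F^{\prime}(u)\equiv0$ restricted to $\overset{\circ}{\mathrm{A}}_{0}$. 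You correctly flagged and handled the one delicate point, namely that a boundary point of a component $U$ of $A$ cannot lie in $A$ at all: it cannot lie in $U$ since components of open subsets of $\mathbb{R}^{N}$ are open, and it cannot lie in another component $V$ since $V$ would then be a neighbourhood of that point disjoint from $U$.
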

\begin{rem}
\label{rem:2}By (\ref{eq:35}), equation (\ref{eq:Noe-NL-Poi}) is
equivalently written in the form\begin{equation}
(\Delta u-F^{\prime}(u))Du=0.\label{eq:Noe-NL-Poi-Alt}\end{equation}

The non-triviality condition $Du\neq0$ means $Du$ is not identically
0, i.e. there is a $x_{0}\in\Omega$ such that $Du(x_{0})\neq0$.\end{rem}
\begin{proof}
Let $u$ be a solution of (\ref{eq:Noe-NL-Poi}) and set\[
\mathrm{A}_{0}:=\{x\in\Omega:\; Du(x)=0\}\]
and\[
\mathrm{A}_{1}:=\{x\in\Omega:\; Du(x)\not=0\}.\]
Obviously $\mathrm{A}_{0}$ is closed relatively $\Omega$, $\mathrm{A}_{1}$
is open and $\mathrm{A}_{0}\cup\mathrm{A}_{1}=\Omega$. It is clear
that (\ref{eq:EL-NL-Poi}) is satisfied in $\mathrm{A}_{1}$. We have
to show (\ref{eq:EL-NL-Poi}) is also satisfied in $\mathrm{A}_{0}$.\smallskip{}

\textsf{Step 1}. Let $\mathrm{D}$ be the subset of $\Omega$ in which
the Euler-Lagrange equation (\ref{eq:EL-NL-Poi}) is satisfied, i.e.\[
\mathrm{D}:=\{x\in\Omega:\;\Delta u(x)=f(u(x))\},\]
where $f:=F^{\prime}$. $\mathrm{D}$ is obviously closed relatively
$\Omega$ and we have already shown that\[
\mathrm{A}_{1}\subset\mathrm{D}.\]
From this, keeping in mind that closures and frontiers are taken relatively
$\Omega$, it follows immediately that $\overline{\mathrm{A}}_{1}\subset\mathrm{D}$,
hence also\[
\partial\mathrm{A}_{1}\subset\mathrm{D}.\]
It is our intention to show that\begin{equation}
\partial\mathrm{A}_{0}\subset\mathrm{D}.\label{eq:1}\end{equation}
For this purpose we will show $\partial\mathrm{A}_{0}=\partial\mathrm{A}_{1}$,
from which (\ref{eq:1}) follows immediately. Indeed, from $\partial\mathrm{A}_{0}=\mathrm{A}_{0}\backslash\overset{\circ}{\mathrm{A}}_{0}$
and \[
\partial\mathrm{A}_{1}=\overline{\mathrm{A}}_{1}\backslash\overset{\circ}{\mathrm{A}}_{1}=(\Omega\backslash\overset{\circ}{\mathrm{A}}_{0})\backslash(\Omega\backslash\mathrm{A}_{0})=(\Omega\backslash\overset{\circ}{\mathrm{A}}_{0})\cap\mathrm{A}_{0}=\mathrm{A}_{0}\backslash\overset{\circ}{\mathrm{A}}_{0}\]
we get $\partial\mathrm{A}_{0}=\partial\mathrm{A}_{1}$ and with this
the validity of (\ref{eq:1}).

If $\overset{\circ}{\mathrm{A}}_{0}=\emptyset$ we are finished, for
$\mathrm{A}_{0}=\partial\mathrm{A}_{0}\subset\mathrm{D}$. Let $\overset{\circ}{\mathrm{A}}_{0}\not=\emptyset$.
By hypothesis, for all $x\in\overset{\circ}{\mathrm{A}}_{0}$ we have
$Du(x)=0$, hence also $D^{2}u(x)=0$ on $\overset{\circ}{\mathrm{A}}_{0}$
and $u(x)=$ const. on connected components of $\mathrm{A}_{0}$.\smallskip{}

\textsf{Step 2}. Fix $x_{0}\in\overset{\circ}{\mathrm{A}}_{0}$. We
will show that there is a $x_{1}\in\partial\mathrm{A}_{0}$ and a
continuous curve $\gamma:\overline{\mathrm{I}}\to\mathrm{A}_{0}$,
$\mathrm{I}=]0,1[$, such that $\gamma(0)=x_{0}$, $\gamma(1)=x_{1}$
and $\gamma([0,1[)\subset\overset{\circ}{\mathrm{A}}_{0}$, i.e. the
curve lies in the interior of $\mathrm{A}_{0}$, with the exception
of $x_{1}$. Let $y\in\mathrm{A}_{1}\neq\emptyset$ by hypothesis
and $\alpha:\overline{\mathrm{I}}\to\Omega$ a continuous curve connecting
$x_{0}=\alpha(0)$ and $y=\alpha(1)$. The set\[
\Gamma:=\{\alpha(t):\, t\in\overline{\mathrm{I}},\;\alpha(t)\in\partial\mathrm{A}_{0}\}\]
is not empty (\cite{DIEU-1}, (3.19.9) and following Remark, p. 70).
Since $\{t\in\overline{\mathrm{I}}:\;\alpha(t)\in\partial\mathrm{A}_{0}\}=\alpha^{-1}(\partial\mathrm{A}_{0})$
is closed, $\tau:=\inf\{t\in\overline{\mathrm{I}}\,:\alpha(t)\in\partial\mathrm{A}_{0}\}$$\in\alpha^{-1}(\partial\mathrm{A}_{0})$,
hence $x_{1}:=\alpha(\tau)\in\partial\mathrm{A}_{0}$ and it is clear
that $\alpha([0,\tau[)\subset\overset{\circ}{\mathrm{A}}_{0}$. For
if there were a $\tau_{1}<\tau$ such that $y^{\prime}=\alpha(\tau_{1})\not\in\overset{\circ}{\mathrm{A}}_{0}$,
then $y^{\prime}\not\in\mathrm{A}_{0}$ and application of the same
procedure for $x_{0}$, $y^{\prime}\in\mathrm{A}_{1}$ would yield
the existence of a $x_{1}^{\prime}=\alpha(\tau^{\prime})\in\partial\mathrm{A}_{0}$
with $\tau^{\prime}<\tau$, which contradicts the definition of $\tau$.
Reparametrisation of $\alpha|[0,\tau]$ yields $\gamma$.\smallskip{}

\textsf{Step 3}. Now let $u(x_{0})=:c_{0}$. Since $x_{0}$ and $x_{1}$
belong to the same connected component of $\mathrm{A}_{0}$, we have
$u(x_{1})=c_{0}$ and $f(u(x_{0}))=f(u(x_{1}))=f(c_{0})=:d_{0}$.
Since by (\ref{eq:1}) $x_{1}\in\mathrm{D}$, we have\begin{equation}
\Delta u(x_{1})=f(u(x_{1}))=d_{0}.\label{eq:2}\end{equation}
But \begin{equation}
\Delta u(x_{1})=\Delta u(\lim_{t\to1-}\gamma(t))=\lim_{t\to1-}\Delta u(\gamma(t))=0\label{eq:3}\end{equation}
for $\gamma(t)\in\overset{\circ}{\mathrm{A}}_{0}$ for all $t\in[0,1[$.
Combination of (\ref{eq:2}) and (\ref{eq:3}) yields $d_{0}=0$,
hence\begin{equation}
f(u(x_{0}))=0.\label{eq:3-1}\end{equation}
This means in particular \[
\Delta u(x_{0})-f(u(x_{0}))=0.\]
With this we have proved $\overset{\circ}{\mathrm{A}}_{0}\subset\mathrm{D}$
and by (\ref{eq:1}) $\mathrm{A}_{0}\subset\mathrm{D}$.
\end{proof}
From the proof of this theorem we conclude without difficulty the
following Corollary.
\begin{cor}
\label{cor:NL-Poi-1}Under the assumptions of Theorem \ref{thm:NL-Poi-1},
if the set $\mathrm{A}_{0}:=\{x\in\Omega:\; Du(x)=0\}$ has an interior
point, then $f(u)=0$ on all of $\overset{\circ}{\mathrm{A}}_{0}$.\end{cor}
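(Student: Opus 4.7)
The plan is to observe that the corollary's conclusion is established pointwise inside the proof of Theorem \ref{thm:NL-Poi-1}: equation (\ref{eq:3-1}) gives $f(u(x_{0}))=0$ for any interior point $x_{0}$ of $\mathrm{A}_{0}$, and the theorem's argument never used any special property of $x_{0}$ beyond being such a point. So I would simply replay Steps 2 and 3 with $x_{0}$ regarded as an arbitrary element of $\overset{\circ}{\mathrm{A}}_{0}$.

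Concretely, fix $x_{0}\in\overset{\circ}{\mathrm{A}}_{0}$, which is nonempty by hypothesis, and pick $y\in\mathrm{A}_{1}$, which is nonempty by the non-triviality of $u$ (cf.\ Remark \ref{rem:2}). Step 2 of the proof of the theorem applies verbatim to produce $x_{1}\in\partial\mathrm{A}_{0}$ and a continuous curve $\gamma\colon\overline{\mathrm{I}}\to\mathrm{A}_{0}$ with $\gamma(0)=x_{0}$, $\gamma(1)=x_{1}$ and $\gamma([0,1[)\subset\overset{\circ}{\mathrm{A}}_{0}$. Since $u$ is constant on the connected component of $\mathrm{A}_{0}$ containing $x_{0}$, we have $u(x_{1})=u(x_{0})=:c_{0}$; and since $\partial\mathrm{A}_{0}\subset\mathrm{D}$ by (\ref{eq:1}), $\Delta u(x_{1})=f(c_{0})$. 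On the other hand $D^{2}u\equiv 0$ on $\overset{\circ}{\mathrm{A}}_{0}$, so $\Delta u\circ\gamma\equiv 0$ on $[0,1[$, and continuity of $\Delta u$ at $x_{1}$ forces $\Delta u(x_{1})=0$. Therefore $f(u(x_{0}))=f(c_{0})=0$, and since $x_{0}\in\overset{\circ}{\mathrm{A}}_{0}$ was arbitrary, $f(u)\equiv 0$ on $\overset{\circ}{\mathrm{A}}_{0}$.

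I anticipate no obstacle. A slicker but less self-contained alternative would be to combine the theorem's conclusion $\Delta u=f(u)$ on $\Omega$ with the identity $\Delta u\equiv 0$ on $\overset{\circ}{\mathrm{A}}_{0}$ (itself an immediate consequence of $Du\equiv 0$ there); I would mention this as a remark but give the proof along the route above, since it is precisely the one hinted at by the phrase \emph{{}``From the proof of this theorem \ldots''}.
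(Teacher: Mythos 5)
Your proposal is correct and is essentially identical to the paper's own proof: the author likewise fixes an arbitrary $x_{0}\in\overset{\circ}{\mathrm{A}}_{0}$, notes $\mathrm{A}_{1}\neq\emptyset$ by the non-triviality hypothesis, and invokes Steps 2 and 3 of the proof of Theorem \ref{thm:NL-Poi-1} to obtain $f(u(x_{0}))=0$ from (\ref{eq:3-1}). Your added detail (and the alternative route via the theorem's conclusion combined with $\Delta u\equiv 0$ on $\overset{\circ}{\mathrm{A}}_{0}$) is fine but not a different argument in substance.
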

\begin{proof}
By hypothesis $\mathrm{A}_{1}\not=\emptyset$. Let $x_{0}\in\overset{\circ}{\mathrm{A}}_{0}$.
Now \textsf{Steps 2} and \textsf{3} of the proof of Theorem \ref{thm:NL-Poi-1}
apply and from (\ref{eq:3-1}) it follows that $f(u(x_{0}))=0$. Since
$x_{0}$ was arbitrary, the assertion is proved.
\end{proof}
As an application, we state the following result for the nonlinear
Poisson equation.
\begin{cor}
\label{cor:NL-Poi-2}Let $\Omega$ be a bounded domain of $\mathbb{R}^{N}$
and $f\in\mathrm{C}(\mathbb{R})$ such that $f(t)\neq0$ for all $t\in\mathbb{R}$.
Then for every solution $u\in\mathrm{C}^{2}(\Omega)\cap\mathrm{C}^{1}(\overline{\Omega})$
of the nonlinear Poisson equation\begin{equation}
\Delta u=f(u)\label{eq:3-2}\end{equation}
 the set $\mathrm{A}_{0}:=\{x\in\Omega:\; Du(x)=0\}$ has no interior
point.\end{cor}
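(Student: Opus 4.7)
The plan is to argue by contradiction, reducing the statement to Corollary~\ref{cor:NL-Poi-1}. Since $f \in \mathrm{C}(\mathbb{R})$, the function $F(t) := \int_0^t f(s)\,ds$ belongs to $\mathrm{C}^1(\mathbb{R})$ with $F' = f$, so equation \eqref{eq:3-2} is precisely the Euler-Lagrange equation attached to the Lagrangian \eqref{eq:Lagr-NL-Poi}. Moreover, by identity \eqref{eq:35}, every classical solution of this Euler-Lagrange equation is automatically a classical solution of Noether's equation \eqref{eq:Noe-NL-Poi}, so $u$ fits the framework of Theorem~\ref{thm:NL-Poi-1}, provided it is non-trivial.

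Suppose for contradiction that $\mathrm{A}_0$ has an interior point $x_0$. I would then distinguish two cases. If $u$ is non-trivial, that is $\mathrm{A}_1 \neq \emptyset$, Corollary~\ref{cor:NL-Poi-1} applies and yields $f(u(x_0)) = 0$, contradicting the hypothesis $f(t) \neq 0$ for every $t \in \mathbb{R}$. If on the other hand $u$ is trivial, i.e.\ $Du \equiv 0$ on $\Omega$, then since $\Omega$ is a domain and hence connected, $u$ must be a constant $c$; substituting into $\Delta u = f(u)$ gives $0 = f(c)$, again contradicting $f(c) \neq 0$. In either situation we reach a contradiction, so no interior point of $\mathrm{A}_0$ can exist.

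The only genuine subtlety is the need to treat the trivial case separately. Corollary~\ref{cor:NL-Poi-1} inherits from Theorem~\ref{thm:NL-Poi-1} the standing assumption $\mathrm{A}_1 \neq \emptyset$ (its Step~2 explicitly selects a point $y \in \mathrm{A}_1$), so it cannot be invoked when $u$ is a constant solution. Fortunately that case is dispatched in one line by the connectedness of $\Omega$ together with the nowhere-vanishing hypothesis on $f$, so no real obstacle arises; the deeper content of the corollary lies entirely in the non-trivial case handled by Corollary~\ref{cor:NL-Poi-1}.
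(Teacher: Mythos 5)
Your proof is correct and follows essentially the same route as the paper: set $F(t)=\int_0^t f(s)\,ds$ so that \eqref{eq:3-2} becomes the Euler--Lagrange equation for the Lagrangian \eqref{eq:Lagr-NL-Poi}, observe that $u$ then solves Noether's equations, and derive a contradiction from Corollary \ref{cor:NL-Poi-1}. The only difference is that you explicitly dispose of the trivial case $Du\equiv 0$ (constant $u$ forces $f(c)=0$), which the paper's one-line proof passes over silently even though Corollary \ref{cor:NL-Poi-1} formally inherits the non-triviality hypothesis of Theorem \ref{thm:NL-Poi-1}; this extra step is a small but genuine improvement in rigor.
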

\begin{proof}
Equation (\ref{eq:3-2}) is the Euler-Lagrange equation of the functional
(\ref{eq:Lagr-NL-Poi}) with $F(t)=\int_{0}^{t}f(s)ds$, for which
we have $F\in\mathrm{C}^{1}(\mathbb{R})$. If $\overset{\circ}{\mathrm{A}}_{0}\neq\emptyset$,
by Corollary \ref{cor:NL-Poi-1} we would have $f(u)=0$ on $\overset{\circ}{\mathrm{A}}_{0}$,
which is absurd.\end{proof}
\begin{rem}
In a too strict sense, Noether's equations would never be equivalent
to the Euler-Lagrange equations, if $F^{\prime}(c)\not=0$ for some
$c\in\mathbb{R}$. For by Remark \ref{rem:2} constant functions are
always solutions of Noether's equations, and then the assumption of
equivalence would lead to $F^{\prime}(c)=0$, by considering the constant
function $u=c$, which as said is a solution of Noether's equations.
Aside from the fact that constant solutions are of little practical
importance, since, taking the example of quantum theory, they may
represent only special instances of physical states, or they might
be not integrable when $\Omega$ is not bounded; they never occur
when one considers non-constant boundary conditions. Equivalence of
Euler-Lagrange and Noether's equations is achieved, if one directly
excludes constant solutions or imposes additional hypotheses such
as boundary conditions as in the following Theorem.\end{rem}
\begin{thm}
\label{thm:2}Let $\Omega$ be a bounded domain of $\mathbb{R}^{N}$,
$F\in\mathrm{C}^{1}(\mathbb{R})$, $g\in\mathrm{C}(\partial\Omega)$
a non-constant function and $\mathcal{H}:=\{u\in\mathrm{C}^{2}(\Omega)\cap\mathrm{C}^{1}(\overline{\Omega}):\, u|\partial\Omega=g\}$.
Then the Euler-Lagrange and Noether equations for the Lagrangian (\ref{eq:Lagr-NL-Poi})
are equivalent in $\mathcal{H}$.\end{thm}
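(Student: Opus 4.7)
My plan is to derive Theorem \ref{thm:2} as a direct corollary of Theorem \ref{thm:NL-Poi-1}, with the boundary hypothesis playing the single role of ruling out the trivial solutions identified in Counterexample \ref{cexa:2}. One direction is immediate and does not use the boundary condition: if $u \in \mathcal{H}$ satisfies the Euler-Lagrange equation $\Delta u = F'(u)$, then, since for the Lagrangian (\ref{eq:Lagr-NL-Poi}) we have $L_{x} = 0$, identity (\ref{eq:35}) reduces to $T_{ij,j} = (\Delta u - F'(u))\, u_{,i}$, whose right-hand side vanishes identically on solutions of (\ref{eq:EL-NL-Poi}). Hence $u$ solves Noether's equations.

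For the converse, let $u \in \mathcal{H}$ be a solution of Noether's equations. The crux is to verify that $u$ is \emph{non-trivial} in the sense of Remark \ref{rem:2}, so that Theorem \ref{thm:NL-Poi-1} applies. I argue by contradiction: suppose $Du \equiv 0$ on $\Omega$. Since $\Omega$ is by assumption a domain, hence connected, this forces $u$ to be constant on $\Omega$, say $u \equiv c$ for some $c \in \mathbb{R}$. But $u \in \mathrm{C}^{1}(\overline{\Omega})$, so by continuity $u|\partial\Omega \equiv c$, which contradicts the hypothesis that $g = u|\partial\Omega$ is non-constant. Therefore there exists $x_{0} \in \Omega$ with $Du(x_{0}) \neq 0$, and Theorem \ref{thm:NL-Poi-1} immediately yields $\Delta u = F'(u)$ throughout $\Omega$.

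The main (and rather mild) obstacle is precisely the step of excluding the degenerate case $Du \equiv 0$: this is exactly the content of the counterexample discussed before the statement, and non-constant Dirichlet data is the cheapest hypothesis that kills it. No further analytic difficulty arises, since all of the real work was already absorbed into Theorem \ref{thm:NL-Poi-1}. In particular, neither connectedness of $\Omega$ nor the existence of a solution in $\mathcal{H}$ need to be discussed further, as connectedness is built into the definition of domain and the statement is vacuous if $\mathcal{H}$ is empty.
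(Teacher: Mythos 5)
Your proposal is correct and follows essentially the same route as the paper: the paper's proof is the one-line observation that every $u\in\mathcal{H}$ is non-trivial (so Theorem \ref{thm:NL-Poi-1} applies), which is exactly the connectedness-plus-boundary-trace argument you spell out, the converse direction having already been established in general via identity (\ref{eq:35}).
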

\begin{proof}
It follows immediately by Theorem \ref{thm:NL-Poi-1}, for every $u\in\mathcal{H}$
is non-trivial.\end{proof}
\begin{rem}
\label{rem:4}We have restricted the above discussion to bounded domains
only for the sake of convenience. Indeed, this hypothesis serves to
maintain integrability in \textsf{(VF)} and guarantee the exchange
of differentiation and integration. The same purpose also serves the
hypothesis $u\in\mathrm{C}^{1}(\overline{\Omega})$ with the exception
of Theorem \ref{thm:2}. Thus the above results, with proper modifications,
are applicable to unbounded domains as well.
\end{rem}

\section{More General Lagrangians}

We proceed to generalizing the results of the previous section by
considering Lagrangians of the form $L(x,u,z)$, which, along with
\textsf{(VF)}, satisfy the condition \textsf{(H)} below. Again, $u$
is a scalar function and argument $z$ corresponds to $\nabla u$.\medskip{}

\begin{tabular}{cl}
\multicolumn{1}{c}{\textsf{(H)}} & \quad{} $L_{x_{i}z_{i}}(x,u,0)=0$ for all $x,u$.\tabularnewline
\multicolumn{1}{c}{} & \quad{} $L_{x_{i}u}(x,u,0)=0$ for all $x,u$ and all $i=1,\cdots,N$.\tabularnewline
\end{tabular}\medskip{}

\begin{example}
\textsf{(i)} All Lagrangians which are independent of $x$ satisfy
\textsf{(H)}. In particular the Lagrangians of classes I and II in
\cite{ALFAL} satisfy \textsf{(H)}.
\end{example}
\textsf{(ii)} Lagrangians of the form\[
L(x,u,z)=\tfrac{1}{2}\varphi(x,u)|z|^{2}+F(u),\]
where $\varphi:\Omega\times\mathbb{R}\to\mathbb{R}$, satisfy \textsf{(H)}.\hfill{}$\square$

Recall the definition of the Euler-Lagrange derivative, formula (\ref{eq:0}).
\begin{thm}
\label{thm:Gen-1}Let $\Omega$ be a bounded domain of $\mathbb{R}^{N}$,
$L\in\mathrm{C}^{2}(\Omega\times\mathbb{R}\times\mathbb{R}^{N})$
a Lagrangian satisfying (H) and $u\in\mathrm{C}^{2}(\Omega)\cap\mathrm{C}^{1}(\overline{\Omega})$
a non-trivial classical solution of Noether's equations\begin{equation}
\mathrm{div}\, T(x,u,Du)+L_{x}(x,u,Du)=0.\label{eq:Noe-Gen}\end{equation}
Then $u$ is a solution of the Euler-Lagrange equation\begin{equation}
\tfrac{\partial}{\partial x_{j}}L_{u_{,j}}-L_{u}=0\label{eq:EL-Gen}\end{equation}
in $\Omega$.\end{thm}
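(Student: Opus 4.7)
The plan is to follow the architecture of the proof of Theorem \ref{thm:NL-Poi-1}, identifying where the explicit $x$-dependence forces new ingredients and invoking hypothesis \textsf{(H)} precisely at those places. Define the Euler--Lagrange operator $E(u) := \tfrac{\partial}{\partial x_{j}}L_{u_{,j}}(x,u,Du) - L_{u}(x,u,Du)$. By identity (\ref{eq:35}) applied to the scalar case, Noether's equation (\ref{eq:Noe-Gen}) is equivalent to $E(u)\,u_{,i}=0$ for every $i$, i.e.\ $E(u)\,Du = 0$ pointwise in $\Omega$. Set $\mathrm{A}_{0} := \{x \in \Omega : Du(x) = 0\}$, $\mathrm{A}_{1} := \Omega \setminus \mathrm{A}_{0}$, and $\mathrm{D} := \{x \in \Omega : E(u)(x) = 0\}$ (closed in $\Omega$, since $u \in \mathrm{C}^{2}$ and $L \in \mathrm{C}^{2}$). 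On $\mathrm{A}_{1}$ the factorisation gives $E(u)=0$, hence $\mathrm{A}_{1}\subset\mathrm{D}$.

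Steps 1 and 2 of the proof of Theorem \ref{thm:NL-Poi-1} are purely topological and transfer verbatim, yielding $\partial\mathrm{A}_{0}=\partial\mathrm{A}_{1}\subset\mathrm{D}$. If $\overset{\circ}{\mathrm{A}}_{0}=\emptyset$ we are done; otherwise fix $x_{0}\in\overset{\circ}{\mathrm{A}}_{0}$, set $c_{0}:=u(x_{0})$, and construct as in Step 2 a curve $\gamma:[0,1]\to\overline{\mathrm{A}_{0}}$ with $\gamma(0)=x_{0}$, $\gamma(1)=:x_{1}\in\partial\mathrm{A}_{0}$, and $\gamma([0,1))\subset\overset{\circ}{\mathrm{A}}_{0}$. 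On $\overset{\circ}{\mathrm{A}}_{0}$, $u$ is constant on connected components, hence $Du\equiv 0$ and $D^{2}u\equiv 0$ along $\gamma([0,1))$; continuity of $u\in\mathrm{C}^{2}$ extends these to $u(x_{1})=c_{0}$, $Du(x_{1})=0$, and $D^{2}u(x_{1})=0$.

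The crucial step is the expansion
\[
E(u)(x) = L_{z_{j}x_{j}}(x,u,Du) + L_{z_{j}u}(x,u,Du)\,u_{,j} + L_{z_{j}z_{k}}(x,u,Du)\,u_{,jk} - L_{u}(x,u,Du).
\]
Evaluated at $x_{1}\in\mathrm{D}$ with $Du(x_{1})=0$ and $D^{2}u(x_{1})=0$, this collapses to $L_{z_{j}x_{j}}(x_{1},c_{0},0) - L_{u}(x_{1},c_{0},0) = 0$; the first line of \textsf{(H)} kills the first term, yielding $L_{u}(x_{1},c_{0},0)=0$. The second line of \textsf{(H)} implies that $L_{u}(\cdot,c_{0},0)$ has vanishing $x$-gradient on the connected set $\Omega$, hence is $x$-independent, so $L_{u}(x_{0},c_{0},0) = L_{u}(x_{1},c_{0},0) = 0$. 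Applying the same expansion at $x_{0}$, where $Du(x_{0})=0$ and $D^{2}u(x_{0})=0$ as well, yields $E(u)(x_{0}) = L_{z_{j}x_{j}}(x_{0},c_{0},0) - L_{u}(x_{0},c_{0},0) = 0$ by the first line of \textsf{(H)} and the identity just proved. Thus $\overset{\circ}{\mathrm{A}}_{0}\subset\mathrm{D}$, and combined with $\partial\mathrm{A}_{0}\subset\mathrm{D}$ we obtain $\mathrm{A}_{0}\subset\mathrm{D}$, i.e.\ $\mathrm{D}=\Omega$.

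The main obstacle is the transfer of the Euler--Lagrange identity from $x_{1}$ to $x_{0}$: in Theorem \ref{thm:NL-Poi-1}, $\Delta u$ vanishes automatically wherever $D^{2}u=0$ and $f(u)$ is $x$-independent, whereas here the extra terms $L_{z_{j}x_{j}}$ and the $x$-dependence of $L_{u}$ must both be neutralised. Hypothesis \textsf{(H)} is engineered for exactly this purpose: its first line eliminates the residual divergence-type term at $z=0$, while its second line makes $L_{u}(\cdot,u,0)$ $x$-independent, so that the EL identity verified at the boundary point $x_{1}$ can be imported to the interior point $x_{0}$ sharing the same value $c_{0}$.
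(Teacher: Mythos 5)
Your proof is correct and follows essentially the same route as the paper's: the same topological decomposition into $\mathrm{A}_{0}$, $\mathrm{A}_{1}$, $\mathrm{D}$, the same curve construction reaching $x_{1}\in\partial\mathrm{A}_{0}$, and the same use of the two parts of \textsf{(H)} — the first to annihilate $L_{x_{j}z_{j}}(\cdot,\cdot,0)$ in the chain-rule expansion at $x_{0}$ and $x_{1}$, the second to transport the vanishing of $L_{u}(\cdot,c_{0},0)$ from $x_{1}$ back to $x_{0}$. Your version merely streamlines the paper's bookkeeping (it dispenses with the intermediate constant $d_{0}$ and the redundant limiting argument for $L_{x_{i}z_{i}}(x_{1},c_{0},0)=0$, which follows directly from \textsf{(H)}), so no substantive difference remains.
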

\begin{rem}
By (\ref{eq:35}), equation (\ref{eq:Noe-Gen}) is equivalently written
in the form\[
\left(\tfrac{\partial}{\partial x_{j}}L_{u_{,j}}-L_{u}\right)u_{,i}=0\]
or in index-free notation\begin{equation}
\delta L(u)\cdot Du=0.\label{eq:Noe-Gen-Alt}\end{equation}
Note that (\ref{eq:Noe-Gen}) is a second order system of partial
differential equations in $u$ and (\ref{eq:EL-Gen}) is a single
second order partial differential equation in $u$.\end{rem}
\begin{proof}
The proof begins exactly as the proof of Theorem \ref{thm:NL-Poi-1}
up to \textsf{Step 3} where the proof of existence of the curve $\gamma$
is complete, with the obvious modification\[
\mathrm{D}:=\{x\in\Omega:\;\delta L(u)(x)=0\}.\]

For fixed $x_{0}\in\overset{\circ}{\mathrm{A}}_{0}$ let $u(x_{0})=:c_{0}$.
Further let $f:=L_{u}$. Since $x_{0}$ and $x_{1}$ belong to the
same connected component of $\mathrm{A}_{0}$, we have $u(x_{1})=c_{0}$
and by \textsf{(H)}\begin{equation}
f(x_{0},u(x_{0}),0)=f(x_{1},u(x_{0}),0)=f(x_{0},c_{0},0)=:d_{0}.\label{eq:4-2}\end{equation}
We have\begin{alignat}{1}
\left.\tfrac{\partial}{\partial x_{i}}L_{z_{i}}(x,u,Du)\right|_{x_{0}} & =L_{x_{i}z_{i}}(x_{0},c_{0},0)+L_{uz_{i}}(x_{0},c_{0},0)u_{,i}(x_{0})+\nonumber \\
 & \hspace{1.4em}L_{z_{i}z_{j}}(x_{0},c_{0},0)u_{,ij}(x_{0})\nonumber \\
 & =L_{x_{i}z_{i}}(x_{0},c_{0},0)=0\label{eq:4-1}\end{alignat}
by \textsf{(H)}. Since by (\ref{eq:1}) $x_{1}\in\mathrm{D}$, we
have in a similar fashion\begin{alignat}{1}
\left.\tfrac{\partial}{\partial x_{i}}L_{z_{i}}(x,u,Du)\right|_{x_{1}} & =L_{x_{i}z_{i}}(x_{1},c_{0},0)+L_{uz_{i}}(x_{1},c_{0},0)u_{,i}(x_{1})+\nonumber \\
 & \hspace{1.4em}L_{z_{i}z_{j}}(x_{1},c_{0},0)u_{,ij}(x_{1})\nonumber \\
 & =L_{x_{i}z_{i}}(x_{1},c_{0},0)=f(x_{1},c_{0},0)=d_{0}\label{eq:4}\end{alignat}
where the second equality from the end follows from the Euler-Lagrange
equations:\[
\left.\tfrac{\partial}{\partial x_{i}}L_{z_{i}}(x,u,Du)\right|_{x_{1}}=L_{u}(x_{1},u(x_{1}),Du(x_{1}))=f(x_{1},c_{0},0).\]
Note that we have omitted a step involving the limiting process $t\to1-$
along $\gamma$ and the associated continuity argument analogous to
that applied in the proof of the following equation. From the first
of \textsf{(H)} and $\gamma(t)\in\overset{\circ}{\mathrm{A}}_{0}$
for all $t\in[0,1[$ we obtain \begin{alignat*}{1}
L_{x_{i}z_{i}}(x_{1},c_{0},0) & =L_{x_{i}z_{i}}(\lim_{t\to1-}\gamma(t),u(\lim_{t\to1-}\gamma(t)),Du(\lim_{t\to1-}\gamma(t)))\\
 & =\lim_{t\to1-}L_{x_{i}z_{i}}(\gamma(t),u(\gamma(t)),Du(\gamma(t)))\\
 & =\lim_{t\to1-}L_{x_{i}z_{i}}(\gamma(t),c_{0},0)=0.\end{alignat*}
Combination of this equation with (\ref{eq:4}) yields $d_{0}=0$,
hence by (\ref{eq:4-2})\[
f(x_{0},c_{0},0)=0.\]
This means in particular \[
\left.\tfrac{\partial}{\partial x_{i}}L_{z_{i}}(x,u,Du)\right|_{x_{0}}-f(x_{0},u(x_{0}),Du(x_{0}))=L_{x_{i}z_{i}}(x_{0},c_{0},0)-f(x_{0},c_{0},0)=0.\]
With this we have proved $\overset{\circ}{\mathrm{A}}_{0}\subset\mathrm{D}$
and by (\ref{eq:1}) $\mathrm{A}_{0}\subset\mathrm{D}$.
\end{proof}
Corollaries \ref{cor:NL-Poi-1} and \ref{cor:NL-Poi-2} transfer to
the general Lagrangians conforming to \textsf{(H),} with the obvious
modifications:
\begin{cor}
Under the assumptions of Theorem \ref{thm:NL-Poi-1}, if the set $\mathrm{A}_{0}:=\{x\in\Omega:\; Du(x)=0\}$
has an interior point, then $L_{u}=0$ on all of $\overset{\circ}{\mathrm{A}}_{0}$.
\end{cor}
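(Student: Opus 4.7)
The plan is to run essentially the same argument as in Corollary \ref{cor:NL-Poi-1}, re-using the machinery already built up in the proof of Theorem \ref{thm:Gen-1}. The non-triviality hypothesis inherited from Theorem \ref{thm:Gen-1} guarantees $\mathrm{A}_{1}\neq\emptyset$, which is precisely what Steps 2 and 3 of that proof need in order to connect an arbitrary interior point of $\mathrm{A}_{0}$ to the boundary $\partial\mathrm{A}_{0}$ via a curve lying (except at its endpoint) inside $\overset{\circ}{\mathrm{A}}_{0}$.

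First, I would fix an arbitrary $x_{0}\in\overset{\circ}{\mathrm{A}}_{0}$ and set $c_{0}:=u(x_{0})$, $f:=L_{u}$. Invoking Step 2 of the proof of Theorem \ref{thm:Gen-1}, I obtain $x_{1}\in\partial\mathrm{A}_{0}$ and a continuous curve $\gamma:\overline{\mathrm{I}}\to\mathrm{A}_{0}$ with $\gamma(0)=x_{0}$, $\gamma(1)=x_{1}$, and $\gamma([0,1[)\subset\overset{\circ}{\mathrm{A}}_{0}$. Because $x_{0}$ and $x_{1}$ lie in the same connected component of $\mathrm{A}_{0}$ (on which $u$ is locally constant, since $Du\equiv0$ there), $u(x_{1})=c_{0}$. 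Step 3 of the same proof then yields $f(x_{0},c_{0},0)=0$: the ingredients are exactly the use of hypothesis \textsf{(H)} to kill the $L_{x_{i}z_{i}}$ and $L_{uz_{i}}$ terms at points of $\overset{\circ}{\mathrm{A}}_{0}$, together with $x_{1}\in\mathrm{D}$, which is the only place the Euler--Lagrange equation is invoked.

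Since $Du(x_{0})=0$, the identity $f(x_{0},c_{0},0)=0$ reads $L_{u}(x_{0},u(x_{0}),Du(x_{0}))=0$, which is the desired vanishing of $L_{u}$ at $x_{0}$. Because $x_{0}\in\overset{\circ}{\mathrm{A}}_{0}$ was arbitrary, this proves the claim.

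There is really no obstacle here beyond bookkeeping: the substantive work has already been done inside the proof of Theorem \ref{thm:Gen-1}. The only mild subtlety worth flagging is ensuring one does not accidentally need to evaluate $L_{u}$ off the zero-gradient locus; hypothesis \textsf{(H)} and the structure of Step 3 guarantee that the identity produced is evaluated precisely at $(x_{0},u(x_{0}),0)=(x_{0},u(x_{0}),Du(x_{0}))$, which is exactly what the statement requires.
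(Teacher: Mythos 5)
Your proposal is correct and follows essentially the same route as the paper, which simply states that Corollary \ref{cor:NL-Poi-1} ``transfers with the obvious modifications'': fix $x_{0}\in\overset{\circ}{\mathrm{A}}_{0}$, note $\mathrm{A}_{1}\neq\emptyset$ by non-triviality, and invoke Steps 2 and 3 of the proof of Theorem \ref{thm:Gen-1} to get $L_{u}(x_{0},u(x_{0}),0)=0$. Your closing observation that the identity is evaluated at $(x_{0},u(x_{0}),Du(x_{0}))=(x_{0},u(x_{0}),0)$ is exactly the right bookkeeping point.
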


\begin{cor}
Let $\Omega$ be a bounded domain of $\mathbb{R}^{N}$ and $L_{u}(x,u,z)\neq0$
in $\Omega\times\mathbb{R}\times\mathbb{R}^{N}$. Then for every solution
$u\in\mathrm{C}^{2}(\Omega)\cap\mathrm{C}^{1}(\overline{\Omega})$
of the partial differential equation in $u$\[
\tfrac{\partial}{\partial x_{j}}L_{u_{,j}}-L_{u}=0\]
 the set $\mathrm{A}_{0}:=\{x\in\Omega:\; Du(x)=0\}$ has no interior
point.
\end{cor}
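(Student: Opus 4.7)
The plan is to mimic the proof of Corollary \ref{cor:NL-Poi-2} and argue by contradiction: assume that $\overset{\circ}{\mathrm{A}}_{0}\neq\emptyset$ and derive a violation of the hypothesis $L_{u}\neq 0$. The key observation is that every classical solution of the Euler-Lagrange equation (\ref{eq:EL-Gen}) is automatically a solution of Noether's equation (\ref{eq:Noe-Gen}), by identity (\ref{eq:35}), so the preceding corollary applies whenever $u$ is non-trivial.

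Under the non-triviality assumption $\mathrm{A}_{1}\neq\emptyset$, the previous corollary yields $L_{u}(x,u(x),0)=0$ for every $x\in\overset{\circ}{\mathrm{A}}_{0}$, which directly contradicts $L_{u}\neq 0$ on $\Omega\times\mathbb{R}\times\mathbb{R}^{N}$. The only delicate point is the case $\mathrm{A}_{1}=\emptyset$, i.e.\ $Du\equiv 0$, where the non-triviality hypothesis of Theorem \ref{thm:Gen-1} fails and the previous corollary is not immediately applicable. In that case $u\equiv c_{0}$ on $\Omega$ by connectedness, and a direct evaluation of (\ref{eq:EL-Gen}) at any $x\in\Omega$ yields
\[
L_{x_{j}z_{j}}(x,c_{0},0)-L_{u}(x,c_{0},0)=0,
\]
since the terms containing $u_{,i}$ and $u_{,ij}$ vanish; by hypothesis \textsf{(H)} the first summand equals zero, forcing $L_{u}(x,c_{0},0)=0$, again contradicting $L_{u}\neq 0$.

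I do not anticipate a substantive obstacle here: the argument is a near-verbatim transcription of the proof of Corollary \ref{cor:NL-Poi-2}, with the constant-solution case dispatched by a one-line computation entirely analogous to (\ref{eq:4-1}) in the proof of Theorem \ref{thm:Gen-1}. The only thing worth flagging is the need to handle the $Du\equiv 0$ case separately, since it is not literally covered by the previous corollary's hypotheses; this is the reason hypothesis \textsf{(H)} enters a second time in the proof, even after it has already been used inside the corollary being invoked.
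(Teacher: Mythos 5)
Your proposal is correct and follows essentially the route the paper intends: the paper gives no separate proof here, stating only that Corollary \ref{cor:NL-Poi-2} ``transfers with the obvious modifications,'' and your argument is exactly that transfer --- contradiction via the preceding corollary, which gives $L_{u}(x,u(x),0)=0$ on $\overset{\circ}{\mathrm{A}}_{0}$. Your separate treatment of the case $Du\equiv 0$ (where the non-triviality hypothesis of the invoked corollary fails) is a point the paper's model proof glosses over, and your one-line computation using \textsf{(H)} closes it correctly.
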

Remark \ref{rem:4} holds as it is. Finally, we can state the equivalence
of Euler-Lagrange and Noether equations as a general theorem by imposing
conditions analogous to Theorem \ref{thm:2}.
\begin{thm}
\label{thm:4}Let $\Omega$ be a bounded domain of $\mathbb{R}^{N}$,
$g\in\mathrm{C}(\partial\Omega)$ a non-constant function and $\mathcal{H}:=\{u\in\mathrm{C}^{2}(\Omega)\cap\mathrm{C}^{1}(\overline{\Omega}):\, u|\partial\Omega=g\}$.
Then the Euler-Lagrange and Noether equations for a Lagrangian satisfying
(H) are equivalent in $\mathcal{H}$.\end{thm}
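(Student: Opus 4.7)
The plan is to reduce Theorem \ref{thm:4} to Theorem \ref{thm:Gen-1} in exactly the manner in which Theorem \ref{thm:2} was reduced to Theorem \ref{thm:NL-Poi-1}. One direction, namely that every Euler-Lagrange solution solves Noether's equations, is already contained in identity (\ref{eq:35}) (differentiating the energy-momentum tensor and substituting $\delta L(u)=0$), so the only work is to establish the converse direction inside $\mathcal{H}$.

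For that, I would let $u\in\mathcal{H}$ be an arbitrary solution of (\ref{eq:Noe-Gen}) and argue that $u$ must be non-trivial in the sense of Theorem \ref{thm:Gen-1}, i.e.\ $Du$ is not identically zero on $\Omega$. Suppose to the contrary that $Du\equiv 0$ on $\Omega$. Since $\Omega$ is a domain, hence connected, this would force $u$ to be constant on $\Omega$; continuity of $u$ up to $\overline{\Omega}$, which is built into the definition of $\mathcal{H}$, would then give $g = u|\partial\Omega \equiv \text{const.}$, contradicting the hypothesis that $g$ is non-constant. Therefore $Du\not\equiv 0$ and the non-triviality hypothesis of Theorem \ref{thm:Gen-1} is satisfied, yielding that $u$ solves the Euler-Lagrange equation (\ref{eq:EL-Gen}).

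There is no real obstacle here; the entire content of the theorem is the observation that prescribing non-constant boundary data rules out precisely the degenerate constant solutions of Noether's equations that Theorem \ref{thm:Gen-1} had to exclude by the non-triviality assumption. In particular, no further use of hypothesis \textsf{(H)} beyond what is already consumed by Theorem \ref{thm:Gen-1} is needed, and the proof occupies only a few lines, essentially a verbatim repetition of the short argument used to deduce Theorem \ref{thm:2} from Theorem \ref{thm:NL-Poi-1}. The one point worth flagging explicitly for the reader is the use of connectedness of $\Omega$, which is what allows the pointwise identity $Du\equiv 0$ to be promoted to global constancy and thus to a boundary value that violates non-constancy of $g$.
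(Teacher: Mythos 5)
Your proof is correct and is essentially the paper's own argument: the paper disposes of Theorem \ref{thm:4} exactly as it does Theorem \ref{thm:2}, by observing that non-constant boundary data $g$ forces every $u\in\mathcal{H}$ to be non-trivial (otherwise $Du\equiv 0$ on the connected set $\Omega$ would make $u$, hence $g$, constant), so Theorem \ref{thm:Gen-1} applies, while the reverse implication is the general identity (\ref{eq:35}). No discrepancy to report.
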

\begin{example}
We consider the Lagrangian of p-Laplacian type \cite{CGS}\[
L(u,z)=\frac{1}{2}\varphi(|z|^{2})+F(u)\]
where $F\in\mathrm{C}(\mathbb{R})$ and $\varphi\in\mathrm{C}^{2}(\mathbb{R}^{+})$
such that $\varphi(0)=0$ and $\varphi^{\prime}(s)\geqslant0\;\forall s\geqslant0$,
which satisfies condition \textsf{(H)}. The energy-momentum tensor
for this Lagrangian is given by\[
T_{ij}=\varphi^{\prime}(|Du|^{2})u_{,i}u_{,j}-\delta_{ij}\left(\dfrac{1}{2}\varphi(|Du|^{2})+F(u)\right).\]
By Theorem \ref{thm:4} the equations\[
\mathrm{div}\,(\varphi^{\prime}(|Du|^{2})Du\otimes Du)-D\left(\dfrac{1}{2}\varphi(|Du|^{2})+F(u)\right)=0\]
and\[
\tfrac{\partial}{\partial x_{i}}\left(\varphi^{\prime}(|Du|^{2})\tfrac{\partial u}{\partial x_{i}}\right)=F^{\prime}(u)\]
with the boundary condition $u|\partial\Omega=g$, where $g\in\mathrm{C}(\partial\Omega)$
is a non-constant function, are equivalent.
\end{example}

\section{\label{sec:Apps}Applications}

\subsection{Equivalence of admissible and inner variations}

The $\mathrm{C}^{2}$ solutions of Euler-Lagrange (respectively Noether)
equations are critical (respectively inner critical) points of the
corresponding variational functional. Since each inner variation gives
rise to an admissible variation \cite{FAL}\[
w=Du\cdot h,\]
the question arises if these two types of variation are equivalent.
In this case, one could work with the smaller set of inner variations
instead of the larger set of admissible variations. This might have
an impact on the numerical computation of solutions. Next theorem
gives an answer to this question.
\begin{thm}
\label{thm:5}Let $J$ be a variational functional with Lagrangian
$L\in\mathrm{C}^{2}(\Omega\times\mathbb{R}\times\mathbb{R}^{N})$
satisfying (H) and $u\in\mathrm{C}^{2}(\Omega)\cap\mathrm{C}^{1}(\overline{\Omega})$
a nontrivial function. Then the variational problem\begin{equation}
\delta J(u)v=0\quad\forall v\in\mathcal{D}(\Omega)\label{eq:6}\end{equation}
is equivalent to the problem\begin{equation}
\delta J(u)w=0\quad\forall w\in\mathcal{I}(\Omega)\label{eq:7}\end{equation}
where $\mathcal{I}(\Omega):=\{Du\cdot h:\, h$ \emph{is an inner variation
of} $u\}$. In simple words one can consider only inner variations
of $u$ in Problem (\ref{eq:6}).\end{thm}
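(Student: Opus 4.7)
The plan is to translate both variational problems into PDE conditions on $u$---namely the Euler--Lagrange equation for (\ref{eq:6}) and Noether's equations for (\ref{eq:7})---and then invoke Theorem \ref{thm:Gen-1}, which already supplies their equivalence under hypothesis \textsf{(H)} and non-triviality of $u$. The crucial computational step is to establish the identity
\begin{equation*}
\mathfrak{d}J(u)h \;=\; \delta J(u)(Du\cdot h), \qquad h\in\mathcal{D}(\Omega)^N,
\end{equation*}
for any scalar $u\in\mathrm{C}^{2}(\Omega)$.

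To derive this identity I would start from formula (\ref{eq:10}), recognise the combination $u_{,i}L_{z_{j}}h_{i,j}-L\,\mathrm{div}\,h$ as $T_{ij}h_{i,j}$ with $T_{ij}$ given by (\ref{eq:12a}), and integrate by parts; the compact support of $h$ kills all boundary contributions, giving
\begin{equation*}
\mathfrak{d}J(u)h \;=\; -\int_{\Omega}\bigl(T_{ij,j}+L_{x_i}\bigr)h_i\,dx.
\end{equation*}
The scalar version of identity (\ref{eq:35}) converts the integrand into $\delta L(u)\,u_{,i}h_i = \delta L(u)\,(Du\cdot h)$; since $Du\cdot h\in\mathrm{C}^{1}(\overline{\Omega})$ has compact support in $\Omega$, the integration-by-parts calculation of Section \ref{sub:2-2} that yields $\delta J(u)v=\int_{\Omega}\delta L(u)\,v\,dx$ remains valid with $v=Du\cdot h$ (the argument uses only $v\in\mathrm{C}^{1}$ with compact support, not smoothness), so the right-hand side equals $\delta J(u)(Du\cdot h)$.

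Once the identity is in place the equivalence is immediate. Problem (\ref{eq:7}) is literally $\delta J(u)(Du\cdot h)=0$ for every $h\in\mathcal{D}(\Omega)^N$, which by the identity is equivalent to $\mathfrak{d}J(u)h=0$ for all such $h$, i.e., to $u$ solving Noether's equations (\ref{eq:Noe-Gen}). Problem (\ref{eq:6}), by the fundamental lemma of the calculus of variations applied to the continuous density $\delta L(u)$, is equivalent to $u$ solving the Euler--Lagrange equation $\delta L(u)=0$. The implication (\ref{eq:6})$\Rightarrow$(\ref{eq:7}) then follows from (\ref{eq:35}) with no further assumption on $u$, while the substantive direction (\ref{eq:7})$\Rightarrow$(\ref{eq:6}) is exactly the content of Theorem \ref{thm:Gen-1} under \textsf{(H)} and non-triviality of $u$, closing the equivalence. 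The only genuine obstacle is the careful bookkeeping in the identity above---signs, index summations, and the compact-support justification for extending the formula for $\delta J(u)v$ beyond $v\in\mathcal{D}(\Omega)$ to $v=Du\cdot h\in\mathrm{C}^{1}$---after which Theorem \ref{thm:Gen-1} carries the remainder of the argument.
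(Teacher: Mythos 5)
Your proposal is correct and takes essentially the same route as the paper: both reduce problem (\ref{eq:7}) to Noether's equations via the fundamental lemma of the calculus of variations, reduce (\ref{eq:6}) to the Euler--Lagrange equation, and let Theorem \ref{thm:Gen-1} supply the substantive implication. The identity $\mathfrak{d}J(u)h=\delta J(u)(Du\cdot h)$ that you derive is precisely what the paper records in the remark immediately following the theorem, and your justification for evaluating $\delta J(u)$ on the merely $\mathrm{C}^{1}$ compactly supported function $w=Du\cdot h$ plays the role of the paper's density argument for the forward direction.
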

\begin{proof}
Equation (\ref{eq:7}) follows immediately from (\ref{eq:6}) when
$u$ is $\mathrm{C}^{\infty}$. Otherwise $w\in\mathrm{C}_{c}^{1}(\Omega)$
while (\ref{eq:6}) requires $v\in\mathcal{D}(\Omega)$. In this case
the proof follows by a simple density argument. For the converse,
if (\ref{eq:7}) is valid, then $\delta J(u)Du\cdot h=0$ for all
$h\in\mathcal{D}(\Omega)^{N}$. By the fundamental lemma of calculus
of variations $\delta J(u)Du=0$, hence $u$ is a solution of Noether's
equations and by Theorem \ref{thm:Gen-1} also a solution of the Euler-Lagrange
equations. Since $u$ is $\mathrm{C}^{2}$, this means $u$ is a critical
point of $J$, i.e. (\ref{eq:6}) is satisfied.\end{proof}
\begin{rem}
For $w=Du\cdot h$, $h\in\mathcal{D}(\Omega)^{N}$, we can prove by
combining formulas (\ref{eq:10}), (\ref{eq:12a}) and (\ref{eq:35})
that equation (\ref{eq:7}) may be written in the form $\mathfrak{d}J(u)h=0$
and recalling formula (\ref{eq:0-1}), $\left.\tfrac{d}{dt}J(u\circ\xi_{h}^{t})\right|_{t=0}=0$.
This admits of a possibly interesting interpretation: if one knows
the distribution function of $u$, in order to solve the second order
partial differential equation $\delta J(u)=0$, one has just to place
the values of $u$ in their right position.
\end{rem}

\subsection{Determination of Lagrangian from conservation law}

As a second application, we present an example of {}``reverse engineering''
the Lagrangian of a physical problem, mentioned in the Introduction.
By this we mean there are experimental data available for a physical
system and one would like to determine the system, in our case the
Lagrangian. This is obviously a kind of inverse problem and, as it
is well-known, such problems may have multiple solutions, or no solution
at all and they may be not well-defined in certain senses, as for
example the solution may not depend continuously on the data.

Assume there is a physical quantity $u$, which, as a conclusion from
the examination of experimental data, is conserved according to the
following physical law: the integral of the vector quantity $\mathbf{q}=\mathrm{div}\,\mathbf{T}$
over any set $V$ of three dimensional space remains constant and
equal to 0; $\mathbf{T}$ being a tensor with components $T_{ij}=u_{,i}u_{,j}-\delta_{ij}\tfrac{1}{2}(|\nabla u|^{2}-\alpha u^{2})$
in an orthogonal coordinate system and $\alpha$ a physical constant.
In reality, this might be the diffusion of a rarefied substance contained
in concentration $u$ in a medium, which is accompanied by degradation
of the substance according to a first order reaction, or the conduction
of heat in the presence of heat production or destruction at a rate
proportional to the temperature $u$. The question is, if from these
data one could determine a Lagrangian so that the Euler-Lagrange equations
obtained from this Lagrangian coincide with the field equations deduced
from experimental data.

Assuming that the system is confined in $\Omega$, from the conservation
law obeyed by the field $u$ we obtain $\mathrm{div}\,\mathbf{T}(x)=0$
for any $x\in\Omega$. This is the experimentally determined field
equations for $u$. If we assume that there is a Lagrangian $L=L(u,z)$
such that the energy-momentum tensor is $\mathbf{T}$, Theorem \ref{thm:NL-Poi-1}
guarantees that the corresponding Euler-Lagrange equations will be
satisfied for every non-constant scalar field $u$ satisfying $\mathrm{div}\,\mathbf{T}(x)=0$.
Thus, it remains to be checked if there is a Lagrangian having as
energy-momentum tensor the tensor $\mathbf{T}$ given above. By formula
(\ref{eq:12a}) $L$ must satisfy the following first order system
of partial differential equations\begin{equation}
z_{i}\frac{\partial L}{\partial z_{j}}-\delta_{ij}L=z_{i}z_{j}-\delta_{ij}\tfrac{1}{2}(|z|^{2}-\alpha u^{2})\label{eq:8}\end{equation}
for any $i,j\in\{1,2,3\}$. We note in passing that this is an overdetermined
system, which may have no solutions. To solve (\ref{eq:8}) we observe
that from\[
z_{i}\frac{\partial L}{\partial z_{j}}=z_{i}z_{j},\quad i\neq j\]
we get\begin{equation}
L(u,z)=\tfrac{1}{2}|z|^{2}+F(u)\label{eq:8-1}\end{equation}
where $F$ is a function to be determined from the additional equations\begin{equation}
z_{i}\frac{\partial L}{\partial z_{i}}-L=z_{i}^{2}-\tfrac{1}{2}(|z|^{2}-\alpha u^{2})\label{eq:9}\end{equation}

for $i\in\{1,2,3\}$; Einstein's summation convention does not apply
in (\ref{eq:9}). From (\ref{eq:9}) on substituting $L$ by (\ref{eq:8-1})
we obtain $F(u)=-\tfrac{\alpha}{2}u^{2}$ and then\begin{equation}
L(u,z)=\tfrac{1}{2}|z|^{2}-\tfrac{\alpha}{2}u^{2},\label{eq:9-1}\end{equation}

which is actually a solution of (\ref{eq:8}), as it may be directly
verified. It follows from the way $L$ was determined, that (\ref{eq:9-1})
is a unique solution of (\ref{eq:8}).

\end{document}